\def\Z{\mathbb {Z}}
\def\N{\mathbb {N}}
\theoremstyle{plain}
\newtheorem{theorem}{Theorem}
\newtheorem*{theorem*}{Theorem}
\newtheorem{lemma}{Lemma}[section]
\newtheorem{hypothesis}{Hypothesis}
\newtheorem{remark}{Remark}
\author{Semchankau Aliaksei}
\title{Maximal subsets free of arithmetic progressions in arbitrary sets
\footnote{This work is supported by the Russian Science Foundation under grant 14--11--00433.}
}
\begin{document}
\maketitle

\begin{abstract}
We consider the problem of determining the maximum cardinality of a subset containing no arithmetic progressions of length $k$ in a given set of size $n$.
It is proved that it is sufficient, in a certain sense, to consider the interval $[1,\dots, n]$.
The study continues the work of Komlós, Sulyok, and Szemerédi.
\end{abstract}

\section{Introduction}

Let us consider an arbitrary set $B \subseteq \Z$ and integer $k \geqslant 3$.
We define the value $f_{k}(B)$ to be the cardinality of maximal subset of $B$, which does not contan nontrivial arithmetical progression of length $k$ (we say arithmetical progression is trivial if all of its elements are equal).
Let us consider the function 

$$
\phi_{k}(n) := \min_{|B| = n}{f_{k}(B)}.
$$

Now we introduce the function $g_{k}(n):= f_{k}([1, 2, ..., n])$.
Let $\rho_{k}(n):= g_{k}(n)/n$ be a density of maximal set free of arithmetical progressions of length $k$ in segment $[1, \dots, n]$.
We know following estimates for $\rho_{k}(n)$:
$$ 
 \frac{1}{e^{c_{k} \sqrt{\ln{n}}}} 
 \ll 
 \rho_{k}(n) 
 \ll 
 \frac{1}{(\ln{\ln{n}})^{s_{k}}} \,,
$$
where $c_{k}, s_{k}$ are positive constants, depending only on $k$. Lower bound belongs to Behrend \cite{Beh46}, and upper bound belongs to Gowers \cite{Gow01}.
Historical retrospective on special case $k = 3$ can be found in works \cite{Shk06}, \cite{Blo12}.

At first sight it seems natural to expect the equality $\phi_{k}(n) = g_{k}(n)$ to hold, although it turns out to be false already for 
$n = 5, k = 3$: $g_{3}(5) = f_{3}([1, 2, 3, 4, 5]) = 4 > 3 = f_{3}(\{1, 2, 3, 4, 7\}) = \phi_{3}(5)$. 
However, intuition still predicts that $\phi_{k}(n)$ does not differ much from $g_{k}(n)$.
In this direction it was proved by Komlós, Sulyok, and Szemerédi \cite{KSS75} that following inequality holds:
$$
\phi_{3}(n) > (1/2^{15} + o(1))g_{3}(n), n \to \infty.
$$
In O'Bryent's work \cite{OBr13} it is stated, without proof, that constant $1/2^{15}$ might be improved to $1/34$.

In here we demonstrate the following:
\begin{theorem}\label{thm:main}
For any integer $k \geqslant 3$ there exists such sequence $n_{1} < n_{2} < \dots$ of natural numbers such that for any element $n$ in it following inequality holds:
$$
\phi_{k}(n) > (1/4 + o(1))g_{k}(n).
$$
Furthermore, the sequence
 $n_{1} < n_{2} < \dots$
 is rather dense in the sense that any segment of the form $[n, ne^{(\ln{n})^{1/2 + o(1)}}]$ contains at least one element of this sequence.
 \end{theorem}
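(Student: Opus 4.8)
The plan is to deduce Theorem~\ref{thm:main} from the statement that \emph{for every $n$ in a suitable sequence, every $B\subseteq\Z$ with $|B|=n$ contains a $k$-AP-free subset of size at least $(1/4-o(1))\,g_k(n)$}. Two ingredients drive this: a transference step, which produces a large $k$-AP-free subset of any set that (after an affine normalisation) sits inside a short interval; and a Komlós--Sulyok--Szemerédi type structural step, which reduces the general case to that situation. Throughout I use the elementary facts that $g_k$ is non-decreasing, that $g_k(2N)\le 2\,g_k(N)$ (split $[1,2N]$ into two halves), and Behrend's lower bound $g_k(n)\gg n\,e^{-c_k\sqrt{\ln n}}$ from the excerpt.

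\textbf{Transference.} Suppose $S\subseteq[1,N]$ with $|S|=n$. Choose a prime $p$ with $N<p=(1+o(1))N$ (prime gaps are negligible at this scale) and a uniformly random $\lambda\in(\Z/p)^{\times}$. Let $T\subseteq[1,\lfloor p/2\rfloor]$ be a maximum $k$-AP-free set, so $|T|=g_k(\lfloor p/2\rfloor)\ge\tfrac12 g_k(p)-1\ge(\tfrac12-o(1))\,g_k(N)$, since $g_k(p)\le 2g_k(\lfloor p/2\rfloor)+O(1)$ and $g_k(p)\ge g_k(N)$. The point of taking $T$ inside $[1,\lfloor p/2\rfloor]$ is that a nontrivial cyclic $k$-AP in $\Z/p$ all of whose terms lie in $[1,(p-1)/2]$ is automatically a genuine $k$-AP in $\Z$: reducing the common difference into $(-p/2,p/2)$, consecutive terms of the cyclic progression differ modulo $p$ by an element of $(-p/2,p/2)$, hence differ by exactly that element in $\Z$, and the terms form a genuine progression. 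Thus $T$ has no nontrivial cyclic $k$-AP. Since multiplication by $\lambda$ is an automorphism of $\Z/p$, neither does $\lambda^{-1}T$, so (as $p>N\ge\operatorname{diam}(S)$) the set $S\cap\lambda^{-1}T$ is a $k$-AP-free subset of $S$, of expected size $n|T|/(p-1)\ge(\tfrac12-o(1))\,(n/N)\,g_k(n)$, using $g_k(N)\ge g_k(n)$. Choosing the best $\lambda$,
\[
 f_k(S)\ \ge\ \Bigl(\tfrac12-o(1)\Bigr)\frac{n}{N}\,g_k(n).
\]

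\textbf{Structural reduction.} It remains to show that, for $n$ in the sequence, every $B$ with $|B|=n$ either already has a $k$-AP-free subset of size $(1-o(1))n$ --- which exceeds $g_k(n)$ since $g_k(n)=o(n)$ by Gowers' bound, and we are done --- or else $B$ contains a subset $B'$ of size $(1-o(1))n$ that is Freiman-isomorphic of order $k$ to a subset of an interval of length $(2+o(1))n$. Granting this, the transference step applied to $B'$ gives $f_k(B)\ge f_k(B')\ge(\tfrac12-o(1))\frac{(1-o(1))n}{(2+o(1))n}\,g_k((1-o(1))n)\ge(\tfrac14-o(1))\,g_k(n)$, the last comparison of $g_k$ at nearby scales being exactly what forces the restriction to a sequence (see below). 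The dichotomy is vacuous for a $B_k$-set (no nontrivial additive relation among $\le k$ of its elements, in particular no $k$-AP), which needs room $\gg n^k$ but is itself $k$-AP-free; the content is that failure of the first alternative forces enough additive structure to compress a $(1-o(1))$-proportion of $B$ into an interval of length essentially $2n$. (In reality one expects to iterate this, peeling off structured pieces living at well-separated scales --- between which $k$-APs cannot mix --- since a set may be part spread-out and part structured, but the spread-out pieces only help. This is where the argument of Komlós--Sulyok--Szemerédi, and O'Bryant's refinement, is improved: their constants $2^{15}$ and $34$ are, in effect, less efficient versions of the constant $2$ above.)

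The structural dichotomy, in this sharp quantitative form and with the sharp constant, is the main obstacle; it is also why one passes to a sequence. The approximate arithmetic in the reduction closes only when $\rho_k(n)=g_k(n)/n$ is sufficiently ``regular'' at the $O(1)$ nearby scales the argument touches; this can fail for individual $n$, but not too often. Indeed, $\rho_k$ is essentially non-increasing along multiples and bounded below by $e^{-c_k\sqrt{\ln n}}$, so it can drop by a constant factor only $O_k(\sqrt{\ln n})$ times over a dyadic range, whence ``regular'' scales occur in every window $[\,n,\ n\,e^{(\ln n)^{1/2+o(1)}}\,]$. Declaring $n_1<n_2<\dots$ to be these scales yields both the stated inequality and the stated density. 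I expect the bulk of the work --- and the only place genuine care is needed --- to be the structural step; the transference and the construction of the sequence should be comparatively routine.
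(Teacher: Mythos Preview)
Your structural dichotomy is not merely unproven---it is false as stated, and this breaks the argument. Take $B$ to be a uniformly random $n$-subset of $[1,Cn]$ for any fixed constant $C>2$. The first alternative fails: any $k$-AP-free subset of $B$ is $k$-AP-free in $[1,Cn]$, so $f_k(B)\le g_k(Cn)=o(n)$. The second alternative also fails. With high probability the $3$-AP relations $x_i-2x_j+x_k=0$ in $B$ (there are $\Theta(n^2)$ of them) have rank $|B|-2$, so every map preserving them is affine; since $\gcd\{x_i-x_j\}=1$ w.h.p., an integer-valued injective affine map has slope of absolute value at least $1$, hence image of diameter $\ge (1-o(1))Cn$. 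Deleting $o(n)$ elements does not change either conclusion. So you cannot hope to put a $(1-o(1))$-fraction of a general $B$ into an interval of length $(2+o(1))n$; the ``constant $2$'' you aim for is simply unavailable.

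There is a second, smaller problem: your transference step is weaker by a factor $2$ than what the paper actually uses. Your dilation-mod-$p$ argument forces $T\subseteq[1,\lfloor p/2\rfloor]$ to kill wrap-around progressions, so it yields $f_k(S)\ge(\tfrac12-o(1))|S|\rho_k(N)$ for $S\subseteq[N]$. The paper instead uses \emph{translates} $A+x$ and a pigeonhole over $x\in[1,sm]$ against a maximal $k$-AP-free set $T\subseteq[m+1,(s+2)m]$; because shifts preserve genuine progressions, there is no half-interval restriction and one gets $f_k(A)\ge(1-o(1))|A|\rho_k\bigl((s+1)m\bigr)$ for a constant $s$. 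Consequently, even if you fed the paper's compression into your transference you would only reach $1/8$.

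The paper's route is therefore genuinely different from your sketch. It does \emph{not} try to keep almost all of $B$: it keeps only a $(1/4-o(1))$-fraction, obtained by two successive reductions modulo a prime (each costing half the set) after an initial linear-algebra step that places $B$ into an interval of length $4n^46^{n/2}$. The surviving set lands in $[C n\ln n]$, not in $[2n]$. The shift argument then gives $\phi_k(n)\ge(1/4-o(1))\,n\,\rho_k(C'n\ln n)$, and the passage from $\rho_k(C'n\ln n)$ to $\rho_k(n)$ is exactly what forces the restriction to a sequence: one iterates $t_{i+1}=Ct_i\ln t_i$ inside a window $[m, m e^{(\ln m)^{1/2+o(1)}}]$ and contradicts Behrend's lower bound if $\rho_k$ dropped by a fixed factor at every step. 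Your final ``regularity'' paragraph is in the right spirit for this last part, but the two earlier steps need to be replaced by the paper's versions.
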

 As we see this result improves bound from \cite{KSS75} for a subsequence of $\N$. We obtain constant $1/4$ since we `compress` given set of numbers modulo a prime number twice and keep roughly half of the elements each time. 
 Our method differs from the one presented at \cite{KSS75} by fewer amount of operations (constists of 2 `compressions'), and therefore by saving more elements of the initial set.

\bigskip

For natural $n$ we denote by $[n]$ the segment $[1, \dots, n]$.

\section{Compressing Lemmas}

Let us consider some set of integers $X = \{x_{1}, x_{2}, \cdots, x_{n} \}$.
We call set $Y = \{y_{1}, y_{2}, ..., \cdots, y_{n}\}$ a \emph{compression} of set $X$, if for any triples $(i, j, k) \in [n]^{3}$ equality $x_{i} - 2x_{j} + x_{k} = 0$ implies $y_{i} - 2y_{j} + y_{k} = 0$ (notice that we do not imply any order of $x_i$ and $y_i$).
This definition is closely related to Freiman homomorphism, see \cite{TV06}. 

Now we state a hypothesis, which we prove only in special case, which however would suffice for us.

\begin{hypothesis}\label{hype:main}
For any $\epsilon > 0 $ there is such subpolynomial function $h(n) = h_{\epsilon}(n)$, such that for any integer set $X$ of size $n$ there exists such $Y \subseteq X, |Y| \leqslant \epsilon n$, for which $X \setminus Y$ might be compressed into subset of segment $[nh(n)]$.
\end{hypothesis}

We prove it for all $\epsilon \in (3/4, 1)$. 
For the sake of transparency, we break the proof into several lemmas. 
Since we are only interested in behaviour of $h(n)$ for large $n$, we would only consider a case when $n$ is large enough.

\begin{lemma}[\bf on compression into an interval of exponential length]\label{lem:first}
Any set of size $n$ might be compressed into a subset of the segment $[4n^{4}6^{n/2}]$.
\end{lemma}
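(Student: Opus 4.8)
The natural idea is to encode each element $x_i$ of the set $X = \{x_1,\dots,x_n\}$ by the vector of signs of its pairwise-type relations, and to realize that encoding inside an interval. Concretely, I would first normalize: replace $X$ by $X - \min X$ so that all elements are nonnegative, which does not affect any relation of the form $x_i - 2x_j + x_k = 0$. The key observation is that the only thing a compression must respect is the family of "collinearity" relations $x_i - 2x_j + x_k = 0$; equivalently, writing $d_{ij} = x_i - x_j$, the relation says $d_{ij} = d_{jk}$. So it suffices to build $Y$ with $y_i - y_j = y_j - y_k$ whenever $x_i - x_j = x_j - x_k$, and for this it is enough that the map $x_i \mapsto y_i$ be a Freiman homomorphism of order $2$ on $X$ that is injective on the multiset of differences in the relevant sense.

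The plan is then the standard projection argument: choose a modulus $q$ and a multiplier $\lambda$, and set $y_i \equiv \lambda x_i \pmod q$, lifting to the representative in $[0,q)$. For this reduction to preserve the relations $x_i - 2x_j + x_k = 0$ in the strong direction we actually want it to be a Freiman isomorphism (not just homomorphism) on $X$, i.e. $\lambda$ should avoid creating new coincidences among the $O(n^3)$ quantities $x_i - 2x_j + x_k$; but since we only need the \emph{homomorphism} direction (equality of $x$'s implies equality of $y$'s), any $\lambda$ works modulo any $q$ — the real content is controlling the \emph{size} of the resulting set while keeping it a compression, which forces us instead to do a dilation into an interval rather than a modular reduction. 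So the actual approach: pick a prime (or just an integer) $p$ with $p > $ (the largest absolute value of $x_i - 2x_j + x_k$, which is at most $2\max X$), and observe we have gained nothing yet — we must shrink $\max X$.

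To genuinely shrink, I would use the Behrend-type / base-expansion trick underlying $6^{n/2}$: sort $X$ as $x_1 < \dots < x_n$, let $\delta_i = x_{i+1} - x_i > 0$ be the $n-1$ gaps, and build $Y$ by choosing new gaps $\delta_i'$ that (a) are large enough that no "accidental" three-term relation appears, and (b) still satisfy $\delta_i' = \delta_j'$ exactly when $\delta_i = \delta_j$, together with the telescoping constraints needed so that $x_i - 2x_j + x_k = 0$ (a statement about sums of consecutive gaps being equal) transfers. A clean way: group equal gaps into at most $n-1$ distinct values, assign to the $t$-th distinct value a new value of the form (something like) a fixed base-$b$ digit pattern so that all the relevant sums of gaps are distinct unless forced equal; choosing $b$ of size $O(\sqrt n)$ and using at most $n/2$ "digits" yields $\max Y \ll n^{O(1)} b^{n/2} \ll 4n^4 6^{n/2}$. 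The polynomial factor $4n^4$ is the slack for the counting (number of relations $\le n^3$, plus room for the dilation and the $+\min$ shift).

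The main obstacle is step (b): ensuring that the new gap-values do not merely preserve the equalities $x_i - 2x_j + x_k = 0$ but create no spurious ones while keeping $\max Y$ within the stated bound — i.e. making the "generic dilation avoids the bad hyperplanes" count cheap enough. I expect the cleanest route is a greedy/probabilistic choice of a dilation factor $\lambda < 4n^4$: there are at most $\binom{n}{3}\cdot(\text{const})$ forbidden congruence-style conditions on $\lambda$ that would merge two distinct values of $x_i - 2x_j + x_k$, each killing a bounded density of candidates, so some valid $\lambda$ of size $O(n^4)$ exists; applying it to the base-$O(\sqrt n)$ re-encoded gaps and shifting to start at $1$ lands $Y$ inside $[4n^46^{n/2}]$.
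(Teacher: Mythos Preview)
Your plan does not yet reach the lemma, and the place where it breaks is precisely the point where the constant $6^{n/2}$ has to be explained.

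First, the arithmetic in your size estimate is inconsistent: you propose a base $b$ ``of size $O(\sqrt{n})$'' together with ``at most $n/2$ digits'', which gives $b^{n/2}\asymp n^{n/4}$, not anything like $6^{n/2}$. The number $6$ in the target bound is an absolute constant (it is $1^2+(-2)^2+1^2$), so no choice of $b$ that grows with $n$ can produce it.

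Second, and more substantively, the ``gap re-encoding'' you sketch is really the same problem in disguise, and you have not supplied the mechanism that solves it. The condition ``$x_i-2x_j+x_k=0$ implies $y_i-2y_j+y_k=0$'' says that the admissible vectors $y=(y_1,\dots,y_n)$ form the kernel of an integer matrix $A$ whose rows are of the shape $(0,\dots,1,\dots,-2,\dots,1,\dots,0)$. Rewriting in gap coordinates does not change the nature of the task: you still need an integer point in a given rational subspace, with \emph{distinct} coordinates and with $\|y\|_\infty\le 4n^4 6^{n/2}$. Your proposal does not say why such a small integer point exists. The paper gets it by Gaussian elimination: pick a maximal set of independent rows, so the reduced system has an $m\times m$ pivot block $D'$ with $|\det D'|\le (\sqrt{6})^{m}\le 6^{n/2}$ by Hadamard's inequality (each row has Euclidean norm $\sqrt{6}$). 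Then any choice of free variables in $[0,n^2)^{t}$ produces, after clearing the denominator $\det D'$, an integer solution with entries $O(n^4 6^{n/2})$; a simple hyperplane count (at most $\binom{n}{2}$ bad hyperplanes, each meeting the cube in $\le (n^2)^{t-1}$ points) guarantees one with all coordinates distinct. The Hadamard step is exactly where the $6^{n/2}$ comes from, and nothing in your outline replaces it.

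Finally, the ``greedy choice of a dilation factor $\lambda<4n^4$'' at the end cannot help: multiplying by $\lambda$ only scales an already-constructed $Y$, it neither shrinks $\max Y$ nor creates the required linear relations. If you want to salvage your plan, the right move is to drop the base-$b$ heuristic and instead argue directly about small integer points in $\ker A$ via the determinant bound above; once you see that, you will have essentially reconstructed the paper's proof.
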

\begin{proof}
Having set $X$ we want to build $Y \subset [4n^{4}6^{n/2}]$ such that $Y$ is a compression of $X$. 

We assign to $X$ a following matrix $A$. Let us enumerate all nontrivial arithmetical progressions of length $3$ in $X$: 
$$
(i_{1}, j_{1}, k_{1}), 
(i_{2}, j_{2}, k_{2}), 
\cdots, 
(i_{p}, j_{p}, k_{p}),
$$ 
where $p$ is the total amount of progressions. Clearly, for any triple $(i_{s}, j_{s}, k_{s})$ equality 
$$
x_{i_{s}} - 2x_{j_{s}} + x_{k_{s}} = 0
$$
holds.
We set $A$ to be a matrix of size $p \times n$. At $s$th row of $A$ we put $1$ at $i_{s}$th and $k_{s}$th column, and $-2$ at $j_{s}$th column. Other entries are occupied with zeros.

For example, set $X = \{ 1, 2, 3, 4, 5 \}$ would be assigned with the following matrix:
$$A = 
\begin{pmatrix}
1 & -2 & 1 & 0 & 0 \\
0 & 1 & -2 & 1 & 0 \\
0 & 0 & 1 & -2 & 1 \\
1 & 0 & -2 & 0 & 1
\end{pmatrix}
$$

It is clear from the definition of matrix $A$ that
$$
A
\begin{pmatrix}
x_{1} \\
x_{2} \\
\cdots \\
x_{n}
\end{pmatrix}
=
\begin{pmatrix}
0 \\
0 \\
\cdots \\
0
\end{pmatrix}
$$ 

Furthermore, $Y$ is a compression of $X$ if and only if
$$
A
\begin{pmatrix}
y_{1} \\
y_{2} \\
\cdots \\
y_{n}
\end{pmatrix}
=
\begin{pmatrix}
0 \\
0 \\
\cdots \\
0
\end{pmatrix}
$$ 

Let us consider an arbitrary set $X$ of size $n$ and its assigned matrix $A$: $Ax = 0$, where $x = (x_{1}, \cdots, x_{n})^{T}$. 
We would demonstrate the existion of such $y = (y_{1}, ..., y_{n})$ such that its coordinates are distinct natural numbers not exceeding $4n^{4}6^{n/2}$, satisfying $Ay = 0$.

\bigskip

Let us solve the equation $Ax = 0$. We choose maximal amount of linearly independent rows and put them to new matrix $A'$. Certainly, $A'x = 0 \Leftrightarrow Ax = 0$. 

We denote size of $A'$ by $m \times n,\ m < n$ (clearly $A$ and $A'$ are degenerate, since sum of elements in each row equals $0$). 
Let us distinguish independent (basis) variables from dependent ones. 
Clearly, there are exactly $m$ dependent variables among $x_1, x_2, \dots, x_n$. Let us swap coordinates in $x = (x_1, \dots, x_n)$ and rows in $A'$ in such a way such that first coordinates of $x$ are dependent, and last coordinates are independent. Via the Gauss elimination method we reduce the system to the following form:

$$
A''x = 
\begin{pmatrix}
1 & 0 & 0 & \cdots\\
0 & 1 & 0 & \cdots \\
0 & 0 & 1 & \cdots
\end{pmatrix}
\begin{pmatrix}
x_{1} \\
x_{2} \\
\cdots \\
x_{n} 
\end{pmatrix}
=
\begin{pmatrix}
0 \\
0 \\
\cdots \\
0
\end{pmatrix}
$$

(order of $x_{1}, x_{2}, ...$ might have changed after elimination).
By Gauss elimination property there exists such nonsingular square matrix $M$ for which $A'' = MA'$. 
Notice that this equality would still hold if we keep only first $m$ columns of $A''$ and $A'$. Therefore, if $E$ and $D'$ are corresponding square matrices, then equality $E = MD'$ ($E$ is the identity matrix) holds. 
Clearly, $M = (D')^{-1}$. 
It is known that
$$
M = (D')^{-1} = 
\begin{pmatrix}
\frac{\det(D'_{1,1})}{\det(D')} & \dots & \frac{\det(D'_{1, m})}{\det(D')}\\
\dots & \dots & \dots \\
\frac{\det(D'_{m,1})}{\det(D')} & \dots & \frac{\det(D'_{m, m})}{\det(D')}
\end{pmatrix},$$
where $D'_{i,j}$ are adjoint matrices. 
Thus, $\|\det{D'} \times M\|_{\infty}$ does not exceed the absolute value of determinant of matrix consisting of elements $1, -2, 0$, (with at most two $-1$ and at most one $2$ in each row), which we can bound by $(\sqrt{1^{2} + 1^{2} + (-2)^2})^{m} = 6^{m/2}$ by Hadamard inequality.

Since $A'$ also consists of elements $-2, 1, 0$, equality $A'' = MA'$ implies that all elements of $\det{D'}A''$ are integers with absolute values not exceeding $2m6^{m/2} \leqslant 2n 6^{n/2}$.

Now we turn to construction of desired $y = (y_{1}, ..., y_{n})$, satisfying all the conditions above. Let us consider equation $A''x = 0$ and denote its first $m$ elements by $w_{1}, ..., w_{m}$, and remaining by $z_{1}, \cdots, z_{t}$, $m + t = n$. We have:

$$A''x = 0 \Leftrightarrow 
\begin{pmatrix}
1 & 0 & 0 & \cdots\\
0 & 1 & 0 & \cdots \\
0 & 0 & 1 & \cdots
\end{pmatrix}
\begin{pmatrix}
w_{1} \\
\cdots \\
w_{m} \\
z_{1} \\
 \cdots \\
 z_{t}
\end{pmatrix}
=
\begin{pmatrix}
0 \\
0 \\
\cdots \\
0
\end{pmatrix},
$$ 

or
$$
w_{1} + a_{1, 1}z_{1} + a_{1, 2}z_{2} + \cdots + a_{1, t}z_{t} = 0,
$$
$$
\cdots
$$
$$ 
w_{m} + a_{m, 1}z_{1} + a_{m, 2}z_{2} + \cdots + a_{m, t}z_{t} = 0.\\
$$
We know that any $a_{i, j}$ becomes integer when multiplied by $\det{D'}$ not exceeding $2n 6^{n/2}$ by absolute value.
From here we obtain that for any $w_{i}$ there exists such $\alpha_{i, 1}, \alpha_{i, 2}, ..., \alpha_{i, t}$ (negative correspondent elements of $A''$, multiplied by $\det{D'}$), such that 
$$
w_{i} = \frac{\alpha_{i, 1}z_{1} + ... + \alpha_{i, t}z_{t}}{\det{D'}},
$$ 
where all of $\alpha_{i, j}$ are integer and bounded by $2n6^{n/2}$ in absolute value.

\bigskip

We now aim to find such a solution $w_{1}, \cdots, w_{m}, z_{1}, \cdots, z_{t}$, where all variables are distinct, natural and do not exceed $4n^{4}6^{n/2}$. 

Now we demonstrate that it is possible to choose from multidimensional cube $[0, K - 1]^{t}$, (where $K = n^{2}$), such $t$-tuple ($z_{1}, ..., z_{t}$), so that all elements in $y = (z_{1}, ..., z_{t}, w_{1}, ..., w_{m})$ would be distinct. Indeed, amount of possible points belonging to cube is $K^{t}$. 
Any equality of the form $z_{i} = z_{j}, z_{i} = w_{j}, w_{i} = w_{j}$ determines a hyperplane of the form $\alpha_{1}z_{1} + ... + \alpha_{t}z_{t} = 0$ - clearly, all integer points belonging to hyperplane can be projected onto the face of hypercube (and projections are integers, too). Therefore there are at most $K^{t - 1}$ integer points on any hyperplane. In total, there are at most  $C_{n}^{2}$ such hyperplanes, therefore, they contain at most$C_{n}^{2}K^{t - 1} < K^{t}$ points in total.

Having this coordinates $(z_{1}, ..., z_{t})$ we construct corresponding $w_{1}, ..., w_{m}$, multiply all elements of $y = (z_{1}, \cdots, w_{1}, \cdots)$ by $\det{D'}$ and obtain an integer-valued vector, whose maximal element does not exceed either $n^2 \times \det{D'} \leqslant n^2 \times 6^{n/2}$, (if it was one of $z_{i}$), or $n \times \max(\alpha)\times \max(z_{i}) \leqslant n \times 2n6^{n/2} \times n^{2} = 2n^{4}6^{n/2}$ (if it was one of $w_{i}$) --- and therefore we can bound maximal element as $2n^{4}6^{n/2}$. 
To get rid off negative numbers, we shift coordinates of $y$ `to right' to obtain set of naturals, with maximal element not exceeding $2\times 2n^{4}6^{n/2}$.
\end{proof}
\begin{remark}
Clearly, one cannot get rid off exponential multiplier $c^{n}$, since there is not such compression for set $\lbrace 0, 1, 2, 4, ..., 2^{n} \rbrace$ that would make maximal element less than $2^n$.
\end{remark}

\begin{lemma}[\bf on compression into subset of segment of cubic length]\label{lem:second}
If set $X$ of size $n$ belongs to segment $[1, \cdots, M]$, where $M = 4n^{4}6^{n/2}$, then there exists such subset $X' \subseteq X, |X'| \geqslant |X|/2$ which might be compressed into subset of segment $[n^{3}]$
\end{lemma}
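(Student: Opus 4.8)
The goal is to compress a set $X$ of size $n$ lying in $[1,M]$ with $M = 4n^4 6^{n/2}$ into a subset of $[n^3]$, at the cost of throwing away at most half the elements. The natural tool is reduction modulo a prime $q$: choosing $q$ in a suitable range and passing from $x$ to $x \bmod q$ is a map that is a Freiman homomorphism on the triples it does not "break," so it sends some three-term APs of $X$ to genuine APs of the image — but it may also create spurious linear relations $y_i - 2y_j + y_k \equiv 0 \pmod q$ that were not relations in $X$. To get an honest compression we must (i) choose $q$ so that the image has size $\geq n/2$ after discarding collisions, and (ii) ensure no \emph{new} relation $x_i - 2x_j + x_k \equiv 0 \pmod q$ is created among the surviving elements, since the definition of compression only constrains where genuine relations go, not where non-relations go — actually it is the reverse: we must not destroy relations. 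Wait: re-reading the definition, a compression requires $x_i - 2x_j + x_k = 0 \Rightarrow y_i - 2y_j + y_k = 0$; reduction mod $q$ automatically preserves every such implication. So the only real constraint is that the $y_i$ be \emph{distinct}, i.e. that the reduction be injective on the chosen subset, while landing in a short interval.

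So the plan is: first pick a prime $q$ with $q \leq n^3$ (say $q$ of size about $n^{2.5}$ or so — any value comfortably below $n^3$ and comfortably above $n$ works) and consider the map $\pi_q : x \mapsto (x \bmod q) \in \{0,1,\dots,q-1\} \subseteq [n^3]$ after shifting by $1$ to land in $[1,n^3]$. The image $\pi_q(X)$ is automatically a compression of $\pi_q^{-1}(\pi_q(X)) \cap X$ restricted to distinct values, because congruence mod $q$ respects the linear relation $x_i - 2x_j + x_k = 0$. The issue is collisions: two elements $x \neq x'$ of $X$ with $x \equiv x' \pmod q$. For a fixed pair $x \neq x'$ in $X$, the number of primes $q$ dividing $x - x'$ is at most $\log_2|x-x'| \leq \log_2 M = O(n)$. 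Summing over all $\binom{n}{2}$ pairs, the total number of "bad incidences" (prime $q$, colliding pair) is $O(n^3)$.

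Next I would use an averaging argument over a set $\mathcal{Q}$ of primes in an interval, say $\mathcal{Q} = \{ q \text{ prime} : q \in [n^{2.9}, n^3] \}$, which by the prime number theorem has $|\mathcal{Q}| \gg n^3/\log n$ elements — more than enough to dominate the $O(n^3)$ bad incidences. Hence for \emph{some} prime $q \in \mathcal{Q}$ the number of colliding pairs is $o(n)$, in fact $O(\log n)$; removing one element from each colliding pair yields $X' \subseteq X$ with $|X'| \geq n - O(\log n) \geq n/2$ on which $\pi_q$ is injective, so $X'$ compresses into $[q] \subseteq [n^3]$. Strictly the lemma only claims $|X'| \geq |X|/2$, which this comfortably beats; one could even arrange $q$ closer to $n$, but keeping $q$ near $n^3$ is what makes the collision count controllable while the target interval $[n^3]$ is respected.

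The main obstacle — and the only place requiring care — is the counting of bad pairs: one must check that the total incidence count $O(n^3)$ genuinely has a smaller order than the number of available primes up to $n^3$, so that a clean prime exists. This hinges on the bound $M = 4n^4 6^{n/2}$ having $\log M = O(n)$, so each difference has $O(n)$ prime factors, giving $O(n \cdot n^2) = O(n^3)$ incidences versus $\gg n^3/\log n$ primes — a comfortable win with a $\log n$ factor to spare, which is exactly why Lemma \ref{lem:first} was stated with an \emph{exponential} (not larger) bound. A secondary point to verify is that after reducing mod $q$ the resulting vector $y$ still satisfies $Ay = 0$ for the matrix $A$ of $X$; this is immediate since each row of $A$ encodes a relation $x_i - 2x_j + x_k = 0$, which passes to $y_i - 2y_j + y_k \equiv 0 \pmod q$, and on the injective subset $X'$ the representatives in $\{0,\dots,q-1\}$ must then satisfy the relation exactly (as its value lies strictly between $-2q$ and $2q$ — one should double-check the range, replacing $X'$ by a further constant-fraction subset if needed to force the relation to hold over $\Z$, not just mod $q$).
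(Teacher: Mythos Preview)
Your plan has a real gap, and it is exactly the point you flag at the end as ``secondary.'' Reduction modulo $q$ sends a genuine relation $x_i-2x_j+x_k=0$ only to a congruence $y_i-2y_j+y_k\equiv 0\pmod q$; with $y_i,y_j,y_k\in\{0,\dots,q-1\}$ the integer $y_i-2y_j+y_k$ can equal $-q$, $0$, or $q$, so injectivity of $\pi_q$ alone does \emph{not} make it a compression in the paper's sense. Your earlier sentence ``the only real constraint is that the $y_i$ be distinct'' is therefore false, and the line ``so $X'$ compresses into $[q]\subseteq[n^3]$'' is unjustified. The standard remedy is precisely your ``further constant-fraction subset'': restrict to whichever of $[0,\lfloor q/2\rfloor)$ or $[\lceil q/2\rceil,q-1]$ contains at least half the images; on such a half $|y_i-2y_j+y_k|<q$, forcing the value to be $0$. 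But this halving costs a genuine factor $1/2$, and you have already thrown away $O(\log n)$ elements to kill collisions, so you finish with $|X'|\ge (n-O(\log n))/2$, strictly below the claimed $|X|/2$. For the downstream application this slack would be harmless, but the lemma as stated is not proved by your argument.

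The paper's proof avoids the shortfall by producing a prime $p<2n^{3}$ that divides \emph{no} difference $x_i-x_j$ whatsoever. The argument is a primorial bound: if each of the first $k$ primes $p_1,\dots,p_k$ divides some difference, then $p_1\cdots p_k$ divides $\prod_{i\neq j}(x_i-x_j)$, whence (using $p_t>t$ and $|x_i-x_j|<M$) $k!\le M^{\binom{n}{2}}$; since $\ln M=O(n)$ this gives $k\ln k=O(n^{3})$ and hence $p_{k+1}<2n^{3}$ for large $n$. Reduction modulo this $p$ is then perfectly injective with zero removals, and the halving trick alone already yields $|X'|\ge n/2$ inside an interval of length at most $p/2\le n^{3}$. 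Your averaging idea is a reasonable substitute in spirit, but as written it only guarantees $O(\log n)$ colliding pairs, not zero; to recover the exact bound you would have to sharpen the count (e.g.\ bound the number of primes in $[n^{3}/2,n^{3}]$ dividing $\prod_{i<j}(x_i-x_j)$ and compare with $\pi(n^{3})-\pi(n^{3}/2)$) so as to exhibit a collision-free prime --- at which point you have essentially reproduced the paper's argument.
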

\begin{proof}
Let us consider first prime numbers $p_{1} = 2, p_{2} = 3, p_{3} = 5, \cdots$. Let us take the minimal prime number which does not divide any difference in $X$ and denote it by $p_{k + 1}$. Then for any $p_{t}, t \leqslant k$, there are such distinct $x_{i}, x_{j}$, such that $p_{t} | (x_{i} - x_{j})$. From here we obtain 
$$
2\times 3 \times 5 \times \cdots \times p_{k} | \prod_{i\neq j}(x_{i} - x_{j}).
$$
From here we obtain the following bound (via using $p_{k} > k, |x_{i} - x_{j}| < M$):
$$
2 \times 3 \times \cdots \times k \leqslant M^{\frac{n^{2} - n}{2}}.
$$
Apply $\log$ to both parts:
$$
k\ln{k} - k \leqslant \frac{n^{2}-n}{2} \ln{M}, 
$$
from where it is easy to observe that $p_{k + 1} < 2n^3 $ for large enough $n$.

Thus, there exists such prime $p \leqslant 2n^{3}$ which does not divide any difference in $X$. 
Let us now consider a set $X' = \{x_{1} \pmod p, x_{2} \pmod p, \cdots \}$.
It has size $n$, and belongs to segment $[0, ..., p - 1]$, therefore intersects by half with one of the segments $L_{1} = [0, \cdots, p/2]$, and $L_{2} = [p/2, p - 1]$ (it is clear, that if elements form a progression in $X$, then so their images do in $X'\cap L_{i}$, provided that all of them belong to this image), and therefore we can remove at most half of the elements such that remaining set is compressible into subset of segment $[n^{3}]$.
\end{proof}

\begin{lemma}[\bf on compression into subset of segment of almost-linear length]\label{lem:third}
If set $X$ of size $n$ belongs to segment $[8n^{3}]$, then for any $\epsilon > 0$ there exists $X' \subseteq X, |X'| \geqslant (1/2 - \epsilon)|X|$ such that $[C_{\epsilon}n\ln{n}]$, where $C_{\epsilon}$ is a constant, depending only on $\epsilon$.
\end{lemma}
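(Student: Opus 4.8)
The plan is to reduce $X$ modulo a well-chosen prime $p$ of size about $n\ln n$, discard the few elements that collide under this reduction, and then keep whichever of the two halves $\{0,\dots,\lfloor(p-1)/2\rfloor\}$, $\{\lceil(p-1)/2\rceil,\dots,p-1\}$ contains more of the surviving residues. The first step is to fix the prime. Put $N:=C_\epsilon n\ln n$, where $C_\epsilon$ is a large constant to be chosen at the end, and observe that every difference $d=x_i-x_j$ with $i\ne j$ is a nonzero integer with $|d|<8n^3$, hence is divisible by at most $\log_N(8n^3)$ distinct primes lying in $[N,2N]$; since $\log_N(8n^3)\to 3$, this is at most $4$ once $n$ is large. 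Summing over the fewer than $n^2$ such differences and dividing by the number of primes in $[N,2N]$, which is at least $c_0 N/\ln N$ by Chebyshev's estimate, one finds a prime $p\in[N,2N]$ dividing at most $4n^2/(c_0 N/\ln N)=O\!\left(n\ln N/\ln n\right)=O(n)$ of the differences; choosing $C_\epsilon$ large enough makes this quantity at most $\epsilon n$.

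For this $p$, let $X''\subseteq X$ be obtained from $X$ by keeping, in each residue class modulo $p$, exactly one element of $X$. If the classes that meet $X$ in $s_1,s_2,\dots$ (with $s_r\ge 2$) elements are the ones causing collisions, then $\sum_r\binom{s_r}{2}$ is precisely the number of colliding pairs, which is at most $\epsilon n$, so the number of deleted elements is $\sum_r(s_r-1)\le\sum_r\binom{s_r}{2}\le\epsilon n$ and $|X''|\ge(1-\epsilon)n$. By construction $x\mapsto x\bmod p$ maps $X''$ injectively onto a subset of $\{0,1,\dots,p-1\}$. Now split $\{0,\dots,p-1\}$ into $L_1=\{0,\dots,\lfloor(p-1)/2\rfloor\}$ and its complement $L_2$, each an interval of length at most $(p+1)/2\le N+1$; one of them, say $L_1$, contains the images of at least $|X''|/2\ge(1/2-\epsilon)n$ elements of $X''$. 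Let $X'$ be the set of those preimages, so $|X'|\ge(1/2-\epsilon)n$. The case where $L_2$ wins is handled identically, since translating $L_2$ down to start at $0$ changes none of the quantities $y_i-2y_j+y_k$.

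It remains to check that $y=x\bmod p$, restricted to $X'$, really is a compression into a short interval. It is injective on $X'\subseteq X''$, so the image has the same cardinality. If $x_i-2x_j+x_k=0$ in $\Z$ for a triple with $x_i,x_j,x_k\in X'$, then $y_i-2y_j+y_k\equiv 0\pmod p$; but $y_i,y_j,y_k\in[0,(p-1)/2]$ forces $y_i-2y_j+y_k\in(-p,p)$, whence it equals $0$. Thus every three-term progression of $X'$ maps to one of the image, i.e. the image is a compression. Finally the image lies in an interval of length at most $N+1$, so after translating it to begin at $1$ it is contained in $[N+1]\subseteq[C_\epsilon n\ln n]$ up to harmlessly enlarging the constant, which completes the argument.

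The crux is the choice of $p$ in the first step: everything depends on having enough primes in $[N,2N]$ to average the collision count over. This is also where the logarithm in the final length comes from — with $N=\Theta(n)$ there are only $\Theta(n/\ln n)$ primes in $[N,2N]$, making the average number of colliding pairs of order $n\ln n$, far too large, so the extra $\ln n$ factor appears to be intrinsic to this reduction-modulo-$p$ approach. One minor point worth spelling out in the write-up is that the reduction may create spurious three-term progressions modulo $p$ that did not exist in $X'$; this does no harm, because the definition of compression only constrains genuine progressions of $X'$, and in the eventual use of the lemma one pulls back progression-free subsets of the image, a direction in which extra progressions are irrelevant.
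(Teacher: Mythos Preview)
Your proof is correct and follows essentially the same strategy as the paper's: pick a prime $p$ of size $\Theta(n\ln n)$ dividing few pairwise differences (via a counting argument over a range of primes and pigeonhole), delete the colliding elements, reduce modulo $p$, and keep the denser half of $\{0,\dots,p-1\}$. The only cosmetic differences are your prime range $[N,2N]$ with $N=C_\epsilon n\ln n$ rather than the paper's $[2n,\,2cn\ln n]$, and your tidier deletion bound $\sum_r(s_r-1)\le\sum_r\binom{s_r}{2}$ in place of the paper's cruder removal of both endpoints of every bad pair.
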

\begin{proof}
For $n$ sufficiently large we consider prime numbers in segment $[2n, \dots, 2cn\ln{n}]$, where $c$ is a positive constant.
By Tchebyshev theorem, when $n$ is large enough, this segment would contain at least $cn$ prime numbers. We number them as $p_{1}, p_{2}, \cdots, p_{s}$, $s > cn$. Consider triples $(i, j, t)$, where $i, j, t$ are such that $p_{t} | (x_{i} - x_{j})$. Notice that each pair $(i, j)$ of indexes participates in at most 2 triples, since $|x_{i} - x_{j}| < 8n^{3}$ and cannot be divisible by 3 or more distinct prime numbers exceeding $2n$. 
Therefore, there are at most $n^{2}$ such triples. 
By Dirichlet's box principle some $p_{t}$ corresponds to at most $n^{2}/cn = n/c$ triples. 
We remove from $X$ all $x_{i}, x_{j}$, belonging to any of this triples, and remaining set $ X_{r}$ would have size at least $(1 - \frac{2}{c})|X|.$ 

\medskip

For set $X_{r}$ it is true that difference of any two distinct elements is not divisible by any prime $p_{t} < 2cn\ln{n}$, and in the same spirit as in previous lemma we remove from $X_r$ at most half of the elements such that remaining set might be compressed into subset $X'$ of segment $[2cn\ln{n}]$. Since we can take constant $c$ arbitrary large (and, accordingly, take $n > n(c)$), we have proved the desired assertion for any $\epsilon > 0.$
 \end{proof}
 
Now we turn to a proof of the Hypothesis \ref{hype:main} in the special case $\epsilon \in (3/4, 1)$:
\begin{proof}
We assume that $\epsilon \in (3/4, 1)$. First we compress set $X$ of $n$ elements into subset of segment $[4n^{4}6^{n/2}]$ by Lemma \ref{lem:first}.
Then we throw away at most half of the elements and compress $X$ into subset of segment $[n^{3}]$ by Lemma \ref{lem:second}. 
Now we fix some $\delta > 0$ and apply Lemma \ref{lem:third} to $ X \subseteq [1, \dots, n^3] \sim [1, \dots 8(\frac{n}{2})^3]$, throw away at most $(\frac{1}{2} + \delta)\frac{n}{2}$ elements and compress remaining elements into the segment $[1, C_{\delta}\frac{n}{2}\ln{\frac{n}{2}}]$. 
In total we loose at most 
$$
\frac{n}{2} + (\frac{1}{2} + \delta)\frac{n}{2} = (\frac{3}{4} + \frac{\delta}{2})n
$$
elements, so we take $\delta$ such that inequality $\frac{3}{4} + \frac{\delta}{2} \leqslant \epsilon$ holds. 
Obviously, $\delta := 2\epsilon - \frac{3}{2} > 0$ would work.
\end{proof}

\section{Proof of Theorem \ref{thm:main}}

In what follows, we would need a following lemma:
\begin{lemma}[\bf on lower-bound for density]\label{lem:dense}
For any natural $a, b$ and natural $k\geqslant 3$ the following inequality holds:
$$
\rho_{k}(3ab) \geqslant \rho_{3}(a )\rho_{k}(b)/3.
$$
\end{lemma}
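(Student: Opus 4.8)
The plan is to prove the equivalent arithmetic statement $g_k(3ab) \ge g_3(a)\,g_k(b)$, from which the assertion follows by dividing through by $3ab$: indeed $\rho_k(3ab) = g_k(3ab)/(3ab) \ge g_3(a)g_k(b)/(3ab) = \rho_3(a)\rho_k(b)/3$. To get the integer inequality I would fix an extremal $3$-AP-free set $A \subseteq [a]$ with $|A| = g_3(a)$ and an extremal (nontrivially) $k$-AP-free set $B \subseteq [b]$ with $|B| = g_k(b)$, and assemble from them a single nontrivially $k$-AP-free subset of $[3ab]$ of size $|A|\,|B|$ by a ``radix'' construction that places $A$ in the high digit and $B$ in the low digit.

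Concretely, put $N = 3b$ and define $S = \{\, N(\alpha-1) + \beta : \alpha \in A,\ \beta \in B \,\}$. Since $\alpha-1 \in \{0,\dots,a-1\}$ and $\beta \in \{1,\dots,b\}$, every element of $S$ lies in $[\,1,\ 3b(a-1)+b\,] = [\,1,\ 3ab-2b\,] \subseteq [3ab]$. Moreover the low digit $\beta$ ranges over an interval of length less than $N$, so $N(\alpha-1)+\beta = N(\alpha'-1)+\beta'$ forces $N(\alpha-\alpha') = \beta'-\beta$ with $|\beta'-\beta| \le b-1 < N$, hence $\alpha=\alpha'$ and $\beta=\beta'$; thus $(\alpha,\beta)\mapsto N(\alpha-1)+\beta$ is injective on $A\times B$ and $|S| = |A|\,|B| = g_3(a)\,g_k(b)$.

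It remains to check that $S$ contains no nontrivial $k$-term progression. Suppose $x_1,\dots,x_k \in S$ form an AP and write $x_i = N(\alpha_i-1)+\beta_i$ with $\alpha_i\in A$, $\beta_i\in B$. For every interior index the second difference vanishes, $x_{i-1}-2x_i+x_{i+1} = 0$, i.e. $N(\alpha_{i-1}-2\alpha_i+\alpha_{i+1}) = -(\beta_{i-1}-2\beta_i+\beta_{i+1})$; the right side has absolute value at most $2b-2 < N$, so both $\alpha_{i-1}-2\alpha_i+\alpha_{i+1} = 0$ and $\beta_{i-1}-2\beta_i+\beta_{i+1} = 0$ for all interior $i$. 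The first of these says the sequence $(\alpha_i)_{i=1}^k$ is itself an arithmetic progression of length $k\ge 3$ lying in $A$; since $A$ is $3$-AP-free its first three terms cannot be a nontrivial progression, so the common difference is $0$ and $(\alpha_i)$ is constant. Then $x_i = N(\alpha_1-1)+\beta_i$ exhibits $(\beta_i)$ as a $k$-term AP inside $B$, which must therefore be trivial, so all the $x_i$ coincide. Hence $S$ is nontrivially $k$-AP-free, $g_k(3ab) \ge |S| = g_3(a)g_k(b)$, and the density inequality follows.

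The only genuinely delicate point is the ``no-carry'' estimate $|\beta_{i-1}-2\beta_i+\beta_{i+1}| < N$, which decouples the high and low digits of the progression and is exactly what lets a $3$-AP-free hypothesis on $A$ (rather than the stronger $k$-AP-free one) suffice: a $3$-AP-free set contains no nontrivial AP of any length $\ge 3$, so forcing the $\alpha$-sequence to be an AP already forces it to be constant. The choice $N = 3b$ makes this estimate hold with room to spare and is precisely the origin of the factor $3$ appearing in the statement.
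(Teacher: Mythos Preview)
Your proof is correct and uses essentially the same product construction as the paper: partition $[3ab]$ into $a$ blocks of length $3b$, select the blocks indexed by a $3$-AP-free set $A\subseteq[a]$, and place a copy of a $k$-AP-free set $B\subseteq[b]$ inside each selected block. The only difference is cosmetic --- the paper puts each copy of $B$ in the \emph{middle} third of its block while you use the first third --- and your explicit no-carry estimate $|\beta_{i-1}-2\beta_i+\beta_{i+1}|\le 2b-2<3b$ is in fact more detailed than the paper's argument, which simply asserts that the resulting union is $k$-AP-free.
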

\begin{proof}
Let us bisect a segment of length $3ab$ into $a$ segments of length $3b$. Let us choose among them those, whose numbers correspond to maximal subset of segment $[a]$, free of arithmetical progressions of length $3$ (clearly, there would be exactly $g_{3}(a) = a\rho_{3}(a)$ of such segments). We bisect chosen segments into subsegments of length $b$, and only keep `middle' ones. Then we take a maximal subset free of arithmetical progressions of length $k$ of size $g_{k}(b) = b\rho_{k}(b)$ in each of these middle subsegments. Clearly, the union of all those subsets does not contain any arithmetical progression of legnth $k$, and therefore $\rho_{k}(3ab) \geqslant g_{3}(a)g_{k}(b)/3ab = \rho_{3}(a)\rho_{k}(b)/3$.
\end{proof}

Before proving Theorem \ref{thm:main}, we need following inequality:
\begin{lemma}\label{lem:phi_recursive_bound}
For large enough natural $n$, natural $k \geqslant 3$ and positive real $\alpha \in (0, 1/4)$, the following inequality holds:
$$
\phi_{k}(n) > \alpha n\rho_{k}(C_{\alpha, k}n\ln{n}).
$$
\end{lemma}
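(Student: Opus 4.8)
The plan is to take an arbitrary set $B$ of size $n$ achieving the minimum in the definition of $\phi_k(n)$ — so $f_k(B) = \phi_k(n)$ — and to apply the special case of Hypothesis~\ref{hype:main} already established for $\epsilon \in (3/4,1)$. Fix $\alpha \in (0,1/4)$ and choose $\epsilon \in (3/4,1)$ with $1-\epsilon > \alpha$; for instance any $\epsilon$ slightly below $1$ works once $\alpha < 1/4 \le 1-\epsilon$ fails, so more precisely pick $\epsilon = 1-\alpha'$ for some $\alpha' \in (\alpha, 1/4)$, which lies in $(3/4,1)$ since $\alpha' < 1/4$. Applying the Hypothesis to $B$ yields $Y \subseteq B$ with $|Y| \le \epsilon n$ such that $B \setminus Y$ can be compressed into a subset $Z$ of the segment $[N]$ with $N = n h(n) \le C_{\alpha,k} n \ln n$ (absorbing the subpolynomial $h(n)$ and the constants from Lemmas~\ref{lem:first}--\ref{lem:third} into $C_{\alpha,k}$; if $h(n)$ is genuinely subpolynomial one may in fact take any $N$ of the form $n^{1+o(1)}$, and $C_{\alpha,k} n\ln n$ is a safe crude bound for large $n$).

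The key observation is that a compression cannot \emph{destroy} arithmetic progressions of length $k$ in the reverse direction — more precisely, I will argue that $f_k$ does not decrease under compression. If $Z$ is a compression of $W := B \setminus Y$ via the bijection $w_i \mapsto z_i$, and $S \subseteq W$ is a subset free of nontrivial $k$-term progressions, I claim the corresponding subset $S' \subseteq Z$ is also free of nontrivial $k$-term progressions. Indeed, a $k$-term progression in $S'$ is witnessed by finitely many three-term relations $z_a - 2z_b + z_c = 0$ among consecutive triples; since a $k$-AP $u_1,\dots,u_k$ is equivalent to the system $u_{i}-2u_{i+1}+u_{i+2}=0$ for all $i$, and compression only \emph{adds} such relations (going from $W$ to $Z$), to get the converse I instead use that the map $w_i \mapsto z_i$ together with its structure forces: actually the cleanest route is to note $f_k$ is a \emph{Freiman-type invariant} and the relevant statement is $f_k(Z) \le f_k(W)$ is false in general — so instead I must use the correct direction: a compression $Z$ of $W$ satisfies that every $k$-AP of $W$ maps to a $k$-AP of $Z$, hence a $k$-AP-free subset of $Z$ \emph{pulls back} to a $k$-AP-free subset of $W$ of the same size, giving $f_k(W) \ge f_k(Z)$, i.e. $f_k$ can only go \emph{up} under compression. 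Therefore $f_k(B) = f_k(W \cup Y) \ge f_k(W) \ge f_k(Z)$, and since $Z \subseteq [N]$ we need a lower bound on $f_k(Z)$ — but $Z$ is an arbitrary subset of $[N]$, not all of $[N]$, so this does not immediately give $g_k(N)$.

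To handle that, observe $|Z| = |W| = n - |Y| \ge (1-\epsilon) n = \alpha' n > \alpha n$. I then need: any subset of $[N]$ of size $\ge \alpha' n$ contains a $k$-AP-free subset of size $\ge \alpha' n \cdot \rho_k(N)/(\text{const})$, or more simply I use that a dense subset of an interval inherits a proportional share of the interval's extremal AP-free configuration. The clean statement to invoke is: if $Z \subseteq [N]$ then $f_k(Z) \ge |Z| \cdot g_k(N)/N = |Z| \rho_k(N)$ fails in general too; rather, the honest bound is $f_k(Z) \ge$ (size of largest AP-free subset of $[N]$ intersected appropriately). The correct and available tool is simply: $f_k([N]) = g_k(N) = N\rho_k(N)$ and one shows $f_k(Z) \ge f_k([N]) - (N - |Z|) = N\rho_k(N) - (N-|Z|)$, which is useless when $\rho_k(N)$ is small. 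So the real argument must be that the \emph{compressed} set can be chosen to be \emph{all} of $[N']$ for some $N'$, or that we should compress a large AP-free subset rather than $B$ itself. The right move: inside $B$, first we do NOT need an AP-free subset; we observe $|Z| \ge \alpha' n$ and $Z \subseteq [N]$ with $N \le C_{\alpha,k} n \ln n$, so $Z$ has density $\ge \alpha' n / N =: \delta$ in $[N]$; then $f_k(Z) \ge f_k$(densest-possible) is not monotone, so instead I use $\rho_k$'s supermultiplicativity via Lemma~\ref{lem:dense}-style pigeonholing: partition $[N]$ into blocks and find one block where $Z$ is dense, recurse — but this loses factors.

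\textbf{Where the difficulty lies.} The main obstacle is precisely this last step: relating $f_k$ of an arbitrary dense subset $Z$ of $[N]$ back to $g_k(N) = N\rho_k(N)$. I expect the intended resolution is the elementary bound that holds because $Z$ misses at most $(\epsilon - (1-\epsilon)) \cdots$ — no; rather, since $|B \setminus Z'| $ is small and we only threw away an $\epsilon$-fraction, and a largest $k$-AP-free subset of $[N]$ has size $g_k(N)$, \emph{most} of it survives restriction to a set of density $1 - o(1)$ only if $\epsilon = o(1)$, which it is not. So the genuine claim must instead be: we get $\phi_k(n) = f_k(B) \ge f_k(Z)$ where $Z$ is a subset of $[N]$ of size $\ge \alpha n$, and then we \emph{directly} bound $f_k(Z) \ge \alpha n \cdot \rho_k(N)$ by noting that scaling the extremal AP-free subset of $[N]$ and using that it refines through $Z$ — concretely, run the Behrend-type / block construction of Lemma~\ref{lem:dense} \emph{inside} $Z$: this is where $\rho_k(C_{\alpha,k} n \ln n)$ enters, and the factor $\alpha$ (rather than $1/4$) appears exactly because $|Z| \ge (1-\epsilon)n$ and $1-\epsilon$ can be pushed up to just below $1/4$. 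I would therefore present the proof as: (1) choose $\epsilon = 1 - \alpha' \in (3/4,1)$ with $\alpha < \alpha' < 1/4$; (2) apply the special case of Hypothesis~\ref{hype:main} to get $Z \subseteq [C_{\alpha,k} n\ln n]$, $|Z| \ge \alpha' n$, a compression of $B \setminus Y$; (3) note $f_k(B) \ge f_k(Z)$ since compression only creates, never destroys, $k$-APs, so AP-free subsets pull back; (4) bound $f_k(Z) \ge |Z| \rho_k(C_{\alpha,k}n\ln n) \ge \alpha n\, \rho_k(C_{\alpha,k} n \ln n)$ for large $n$, by applying the interval-density lower bound (the block argument) to $Z$ inside its ambient interval. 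Step (4) is the crux and the place where care about whether $\rho_k$ is roughly monotone / whether a density-$\delta$ subset of $[N]$ contains a $\delta$-fraction of an extremal AP-free set is needed.
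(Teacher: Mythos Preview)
Your setup in steps (1)--(3) matches the paper exactly: apply the special case of Hypothesis~\ref{hype:main} with $\epsilon$ just above $3/4$ to get a compression $A\subseteq[m]$, $m=C_\alpha n\ln n$, of size $|A|>\alpha' n$ for some $\alpha'\in(\alpha,1/4)$, and observe that $k$-AP-free subsets of $A$ pull back (compressions only add three-term relations, hence only add $k$-APs). You also correctly diagnose step (4) as the crux.

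But you do not prove step (4), and the claim $f_k(Z)\ge |Z|\rho_k(N)$ for an arbitrary $Z\subseteq[N]$ does not follow from any monotonicity or from ``running the block construction inside $Z$''---as you yourself note, none of the obvious approaches work. This is a genuine gap, and it is precisely the content the paper supplies.

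The missing idea is a shifting/averaging argument. Place the extremal $k$-AP-free set $T$ in an interval of length $(s{+}1)m$ for a large constant $s$, and look at the shifts $A+1,\dots,A+sm$. Every point of $T$ in the first $sm$ positions is covered by exactly $|A|$ shifts, so some shift satisfies $|(A+x)\cap T|\ge |T_1|\,|A|/(sm)$. If no shift reaches $(1-\epsilon)|A|\rho_k((s{+}1)m)$, then the tail $T_2=T\setminus T_1$, which sits in an interval of length $m$, would have size at least $(1+s\epsilon)m\,\rho_k((s{+}1)m)$. Comparing with the trivial bound $|T_2|\le m\rho_k(m)$ and invoking Lemma~\ref{lem:dense} in the form $\rho_k((s{+}1)m)\ge \tfrac13\rho_3((s{+}1)/3)\rho_k(m)$ yields $(1+s\epsilon)\rho_3((s{+}1)/3)\le 3$, which fails for $s$ large since $\rho_3$ decays only subpolynomially. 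Hence the good shift exists; $(A+x)\cap T$ is $k$-AP-free in $A+x$ and pulls back through the compression to give $\phi_k(n)\ge (1-\epsilon)|A|\rho_k((s{+}1)m)>\alpha n\,\rho_k(C_{\alpha,k}n\ln n)$.

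So Lemma~\ref{lem:dense} is used not to build something inside $Z$, but to rule out $\rho_k$ dropping too fast between scales $m$ and $(s{+}1)m$, which is what powers the averaging step you were missing.
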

\begin{proof}
Let us consider an arbitrary set $X$ of $n$ elements. By special case of Hypothesis \ref{hype:main} with $1 - \frac{1+\alpha}{2} \rightarrow \epsilon$, one can remove at most $\epsilon n$ elements in such a way, so that remaining set might be compressed into subset $A$ of segment $[C_{\alpha}n\ln{n}]$ of size $\frac{1/4 + \alpha}{2}n$. 
Let us set $m:=C_{\alpha}n\ln{n}$. 
Now we consider $\epsilon > 0$ such that $\frac{1/4 + \alpha}{2}(1 - \epsilon) > \alpha$. 
Let us show that there exists such natural number $s$, depending only on $\alpha$, with the following property: if one considers maximal subset free of arithmetical progressions of length $k$ (which we denote by $T$) in the segment $[m + 1, m + (s + 1)m]$, then there is such a shift $A + x$ of set $A$, which has large intersection with $T$ (clearly, $|T|=(s+1)m\rho_{k}((s+1)m)$):

\begin{equation}\label{eq:shifts}
|(A+x) \cap T| \geqslant (1 - \epsilon)|A|\rho_{k}((s + 1)m). 
\end{equation}

Indeed, let us consider shifts of $A$ `to the right': $A + 1, A+2, \dots, A+sm$. Notice that any element of $T$, located left to $m + sm$, belongs to exactly $|A|$ shifts. 
Let $T_1 := T \cap [m + 1, m + sm]$ and $T_2 := T \cap [m + sm + 1, m + (s+1)m]$. 
Clearly $|T| = |T_1| + |T_2|$. Let us assume that (\ref{eq:shifts}) does not hold. 
By Dirichlet's box principle some shift of $A$ intersects $T$ by at least $|T_1| |A|/sm$ elements, and therefore one can conclude that $|T_1| |A|/sm \leqslant (1 - \epsilon)|A|\rho_{k}((s + 1)m)$, and therefore $|T_2| \geqslant |T| - |T_1| \geqslant (1 + s\epsilon)\rho_{k}((s + 1)m)$ elements of $T$. 

By Lemma \ref{lem:dense} (we assume that $s+1$ is divisible by $3$) we see $\rho_{k}(m) 
\geqslant (1 + s \epsilon)\rho_{k}((s + 1)m) \geqslant (1 + s\epsilon)\rho_{3}((s + 1)/3)\rho_{k}(m)$ (we derive leftmost inequality from the fact that set free of progressions of length $k$ cannot have density more than $\rho_{k}(m)$ on segment of length $m$). Therefore, to get a contradiction, it is enough to take $s$ to be that large so that inequality $(1 + s\epsilon)\rho_{3}((s + 1)/3) \geqslant 1$ holds. This is possible since $\rho_{3}(n) \geqslant \frac{1}{e^{c_{3}\sqrt{\ln{n}}}}$, denominator is subpolynomial, and the function $(1 + s\epsilon)\rho_{3}(\frac{s + 1}{3})$ has polynomial growth on $s$. So, we obtained required $s$ depending on $\epsilon$ and $k$, or on $\alpha$ and $k$.

So, now we have desired inequality
$\phi_{k}(n) > \frac{1/4 + \alpha}{2}(1 - \epsilon)n\rho_{k}((s + 1)m) > \alpha n \rho_{k}(H_{\alpha, k}n\ln{n})$.

\end{proof}

Now we turn to Theorem \ref{thm:main}:
\begin{proof}
Let us suppose that statement of Theorem \ref{thm:main} does not hold for some $k \geqslant 3$. 
Therefore, there exists some $\epsilon > 0$, such that for any $o(1)$ there is some segment $I = [m, me^{(\ln{m})^{1/2 + o(1)}}]$, such that for any $n \in I$ inequality $\phi_{k}(n) < (1/4 - \epsilon)g_{k}(n)$ holds. 
On the other side, by Lemma \ref{lem:phi_recursive_bound}, any $n \in I$ satisfies $(1/4 - \epsilon)g_{k}(n) \geqslant \alpha n\rho_{k}(C_{\alpha, k}n\ln{n})$, where $\alpha > (1/4 - \epsilon)$ (one can set $\alpha: = 1/4 - \epsilon/2$). 
From here we obtain that for some constant $c > 1$ ($c := \frac{\alpha}{1/4 - \epsilon}$) inequality $\rho_{k}(n) > c\rho(Cn\ln{n})$ takes place whenever $n \in I$.

Now we build the sequence $t_{1} = m, t_{2} = Ct_{1}\ln{t_{1}}, t_{3} = Ct_{2}\ln{t_{2}}, \cdots$ (we continue while $t_{i} \in I$ holds --- clearly, there are at least $(\ln{m})^{1/2 + o(1)}$ such $t_{i}$).

Therefore,
$$
\rho_{k}(t_{1}) > c\rho_{k}(t_{2}) > c^{2}\rho_{k}(t_{3}) > \cdots
$$
Now, combining lower bound for $\rho_{k}(n)$, and the fact that sequence of $t_i$ has at least $(\ln{m})^{1/2 + o(1)}$ elements, the bound $\rho_{k}(t_1) \geqslant c^{i-1}\rho_{k}(t_i)$ would yield a contradiction for the last $t_i$ in the list.
\end{proof}

\printbibliography[
heading=bibintoc,
] 

\noindent{A.S.~Semchankau

The Steklov Institute of Mathematics

119991, Russian Federation, Moscow, Ulitsa Gubkina, 8}

{\tt aliaksei.semchankau@gmail.com}

\end{document}